\def\rr{{\mathbb R}}
\def\rn{{{\rr}^n}}
\def\cc{{\mathbb C}}
\def\nn{{\mathbb N}}
\def\cp{{\mathcal P}}
\def\cm{{\mathcal M}}
\def\ca{{\mathcal A}}
\def\cd{{\mathcal D}}
\def\cb{{\mathcal B}}
\def\cw{{\mathcal W}}
\def\fz{\infty}
\def\lz{\lambda}
\def\lf{\left}
\def\r{\right}
\def\ls{\lesssim}
\def\wz{\widetilde}
\def\cx{{\mathcal X}}
\def\loc{{\mathop\mathrm{\,loc\,}}}
\def\esinf{\mathop\mathrm{\,ess\,inf\,}}
\def\esup{\mathop\mathrm{\,ess\,sup\,}}
\def\q1{\wz q}
\def\Q1{q_1}
\def\vlp{{L^{p(\cdot)}(\cx)}}
\def\aaa{{\ca_{p(\cdot)}^s(\cx)}}
\def\bbb{{\cb_{p(\cdot)}^s(\cx)}}
\def\www{{\cw_{p(\cdot)}^s(\cx)}}
\def\loc{{\mathop\mathrm{loc\,}}}
\newtheorem{thm}{Theorem}[section]
\newtheorem{prop}[thm]{Proposition}
\newtheorem{lem}[thm]{Lemma}
\newtheorem{cor}[thm]{Corollary}
\theoremstyle{definition}
\newtheorem{defn}[thm]{Definition}
\newtheorem{rem}[thm]{Remark}
\numberwithin{equation}{section}
\numberwithin{equation}{section}
\begin{document}
\title{\bf\Large  Characterizations of variable fractional Haj{\l}asz-Sobolev spaces
\footnotetext{\hspace{-0.35cm} 2020 {\it
Mathematics Subject Classification}. 46E36,42B25.
\endgraf {\it Key words and phrases}. Sobolev space, variable exponent,
Haj{\l}asz gradient, metric measure space.
\endgraf This project is supported by the
Natural Science Foundation of Changsha
(Grant No. kq2202237).}}
\author{Xiaosi Zhang\footnote{Corresponding
author,\ {}} \  and Qi Sun}
\date{ }
\maketitle

\vspace{-0.7cm}
\begin{center}
\begin{minipage}{13cm}
{\small {\bf Abstract}\quad}
Let $(\mathcal X,\rho,\mu)$ be a space of homogeneous type,
$p(\cdot):\ \mathcal X \to[1,\infty)$ a variable exponent satisfying
the globally log-H\"older continuous condition, and $s\in(0,\infty)$.
In this article,
the author introduce the variable fractional Sobolev spaces
$W_{p(\cdot)}^s(\mathcal X)$ on $\mathcal X$ via Haj{\l}asz gradient.
Using various maximal functions, several
characterizations of this space are established.

\end{minipage}
\end{center}
\section{Introduction}

The purpose of this note is to study the variable fractional
Haj{\l}asz-Sobolev spaces $\www$ on a space of homogeneous type $(\cx,\rho,\mu)$.
We mainly establish equivalent characterizations of the variable fractional
Haj{\l}asz-Sobolev spaces via maximal functions.

Recall that the well-known Haj{\l}asz-Sobolev space $\cw_p^1(\cx)$ on metric measure spaces
was originally developed by Haj{\l}asz \cite{Ha96} by introducing
the Haj{\l}asz gradient. A non-negative function $g$ is called a Haj{\l}asz gradient of
$f\in L^p(\cx)$, if it holds that, for almost every $x\in\cx$,
$$|f(x)-f(y)|\leq \rho(x,y)[g(x)+g(y)].$$

Due to the remarkable paper of Haj{\l}asz \cite{Ha96}, Sobolev spaces on metric measure
spaces attract much attention. Indeed, via giving a new characterization
of the classical Sobolev on $\rn$ via Haj{\l}asz gradient in \cite{Ha96}, which
only depends on the metric and measure,
the author hence defined the 1-order Sobolev spaces on metric measure spaces.
This opened the door for the study of Sobolev spaces
on metric measure spaces.
Motivated by \cite{Ha96} and as generalization,
Liu in \cite{ll02} investigated the Sobolev spaces with higher order on on metric measure
spaces and mainly gave their equivalent characterizations in terms of polynomials.
Later, Yang \cite{Ya03} studied the fractional
Haj{\l}asz Sobolev space on the setting of the space of homogeneous type
and established some equivalent characterizations using (sharp) maximal functions,
which was a generalization of the corresponding results on subsets of $\rn$
in \cite{hu03}.
 Moreover,
the generalization of Sobolev spaces on metric measure spaces into the setting of
variable exponents is a natural subject.
For example, in \cite{hh06}, Harjulehto et al. sdudied variable exponent Sobolev
spaces on metric measure spaces employing two definitions: Haj{\l}asz type and
Newtonian type. We also refer the reader to references \cite{as09,dh11, ha18}.

As a generalization of classical Lebesgue spaces $L^p$ with a constant exponent $p\in(0,\fz)$,
the Lebesgue spaces with variable exponent $p(\cdot)$ on the
Euclidean space $\rn$ as well as on metric measure space attracts more and more attention
in recent years. Indeed, the variable Lebesgue space with variable exponent
have been independently investigated in \cite{fz01,kr91,S00},
and become the intensive research due to the development in \cite{cruz03} of Cruz-Uribe and
\cite{din04} of Diening.
This kind of variable Lebesgue spaces was further used into the study on
the variable Hardy space \cite{ns12,cw14,zsy16}, the variable Besov and Triebel-Lizorkin spaces
\cite{ah10,dhr09,yzy151,yzy15} and also the variable Sobolev space \cite{hh06,dh11}.
The variable Lebesgue space $\vlp$ on a metric measure space $(\cx,\rho,\mu)$ is
defined to be the set of all
measurable functions $f$ satisfying $\int_\cx [|f(x)|/\lambda]^{p(x)}\,d\mu(x)<\fz$
for some $\lambda\in(0,\fz)$,
which obviously is a generalization of the classical Lebesgue space $L^p(\cx)$
 via taking $p(\cdot)\equiv p\in[1,\fz)$.
They are widely used in harmonic analysis
\cite{cf13}, in partial differential equations and variation calculus \cite{am05,hhl08,su09},
in fluid dynamics and image processing \cite{am02}.
For the history of variable functions on metric measure space, we also refer to
\cite{as09,ah15,hh04,hh06,ha18} and their references.
Particularly, Harjulehto et al. \cite{hh04} proved that the boundedness of Hardy-Littlewood maximal
operator on metric measure space but limited on bounded spaces.
This was further
generalized by Adamowicz et al. \cite{ah15} to unbounded metric measure space,
which makes it more widely to investigate the variable function spaces on
metric measure space.

The spaces of homogeneous type was introduced in \cite[Chapter 3]{cw71}.

\begin{defn}\label{de-1}
Let $(\cx, \rho, \mu)$ be a (quasi-)metric measure space, where $\mu$
is a Borel regular measure such that
all balls have positive measures defined by $\mu$
and $\rho$ is quasi-metric, namely, there exists a constant $A \in [1,\fz)$ such that,
for all $x, y$ and $z \in \cx$,
\begin{equation}\label{eq-2}
\rho(x,y)\leq A[\rho(x,z)+\rho(z,y)].
\end{equation}
The triple $(\cx,\rho,\mu)$ is called a space of
homogeneous type measure if the measure $\mu$ is doubling, namely,
there is a constant $C_\mu \geq 1 $ such
that, for all $0 < r < diam\cx$ and all $x\in \cx$,
\begin{equation*}
\mu(B(x,2r))\leq C_\mu \mu(B(x,r)).\label{eq-1}
\end{equation*}
The constant $C_\mu$ is called the doubling constant of $\mu$.
\end{defn}

\begin{rem}
From \eqref{eq-1}, it is obviously that for any $\alpha>0$,
\begin{equation}\label{eq-16}
\mu(B(x,\alpha r))\leq C_\alpha \mu(B(x,r)),
\end{equation}
where $C_\alpha=(2\alpha)^{\log_2{C_\mu}}$ depends on $C_\mu$ and $\alpha$.
\end{rem}

We next list some basic notions and notation for the variable function space.
Denote by $\cp(\cx)$ the set of all variable
exponents $p(\cdot):\ \cx \to[1,\fz)$ satisfying
$$1\leq p^-:=\esinf_{x\in\cx}p(x)\le p^+:=\esup_{x\in\cx}p(x)<\fz.$$
For any $p(\cdot)\in \cp(\cx)$,
the \emph{variable Lebesgue space} $\vlp$ is defined to be the set of all
$\mu$-measurable functions $f$ such that
$$\|f\|_{\vlp}:=\inf\lf\{\lambda\in(0,\fz):\ \int_\cx\lf[\frac{|f(x)|}{\lz}\r]^{p(x)}
\,d\mu(x)\leq 1\r\}<\fz.$$

\begin{defn}
A function $p(\cdot):\ \cx\to\rr$ is said to be globally log-H\"older
continuous in $\cx$ with a basepoint $x_0\in \cx$, denoted by $p(\cdot)\in C^{\log}(\cx)$,
if there exists $C_{\log}(p)\in(0,\fz)$ such that, for all
$x,y\in \cx$,
\begin{equation*}
|p(x)-p(y)|\leq\frac{C_{\log}(p)}{\log (e +1/d(x,y))},
\end{equation*}
and there exist $p_\infty\in\rr$  and a constant $C_\fz(p)\in(0,\fz)$ such that,
for all $x\in \cx$,
\begin{equation*}
|p(x)-p_\infty|\leq \frac{C_\fz(p)}{\log (e +d(x,x_0))}.
\end{equation*}
\end{defn}

\begin{rem}\label{rem-1}
Let $\cx$ be a space of homogeneous type and $p(\cdot)\in P(\cx)$.
\begin{enumerate}
\item[{\rm(i)}] $\vlp$ is a Banach space (see \cite[Lemma 3.1]{hh04}).
\item[{\rm(ii)}] It is easy to see that, for any
$s\in[\frac{1}{p^-},\infty), \||f|^s\|_{\vlp}=\|f\|_{L^{sp(\cdot)}(\cx)}^s.$
\item[{\rm(iii)}]  If $p(\cdot)\in C^{\log}(\cx)$ with $p^-\in(1,\fz)$,
 then, for all $f\in\vlp$, it holds that
    $$\|\cm f\|_{\vlp}\leq C\|f\|_{\vlp},$$
where $C$ depends on $p(\cdot)$ (see \cite[Corollary 1.8]{ah15}).

\item[(iv)] If $\cx$ is bounded, $p(\cdot), q(\cdot)\in \cp(\cx)$ and
$p(x)\leq q(x)$, then
    $\|u\|_{\vlp}\leq\|u\|_{L^{q(\cdot)}(\cx)}$ (see \cite[Theorem 2.8]{kr91}).
\end{enumerate}
\end{rem}
Recall that the
Hardy-Littlewood maximal operator, denoted by $\cm$, is defined for a
locally integrable function $f$ by setting, for any $x\in\cx$,
$$\cm(f)(x):=\sup_{B\ni x}\frac{1}{\mu(B)}\int _{B}|f(y)|\,d\mu(y),$$
where the supremum is taken over all balls $B\subset\cx$.

We now give the definition of variable fractional Haj{\l}asz-Sobolev spaces
on the space of homogeneous type.
\begin{defn}\label{de-4}
Let $\cx$ be a space of homogeneous type, $p(\cdot)\in \cp(\cx)$ and
$s\in(0,\fz)$.

(i) Let $f\in\vlp$. Then a non-negative function $g\in\vlp$ is
called to be a $s$-order Haj{\l}asz gradient of $f$ if, for a.e. $x,\ y\in \cx$,
$
\lf|f(x)-f(y)\r|\leq \rho(x,y)^s\lf[g(x)+g(y)\r].
$
Denote by $\cd_s(f)$ the set of all $s$-order Haj{\l}asz gradients of $f$.

(ii) The \emph{variable fractional Haj{\l}asz-Sobolev space} $\www$ is defined
to be the set of all $f\in \vlp$ satisfying $\cd_s(f)\neq \emptyset$.
Moreover, for any $f\in \www$, we define
$$\|f\|_{\cw_{p(\cdot)}^s}(\cx)=\|f\|_\vlp+\inf_{g\in{\cd_s(f)}}\|g\|_\vlp,$$
where the infimum is taken over all function $g\in \cd_s(f)$.
\end{defn}

\begin{rem}
If $p(\cdot)\equiv p\in[1,\fz)$, then the space $\www$ goes back
to the space $\cw_{p}^s(\cx)$, which was studied by Yang \cite{Ya03}.
\end{rem}

One of the main results in this article is stated as follows.
\begin{thm}\label{thm-1}
Let $\cx$ be a space of homogeneous type,
$p(\cdot)\in C^{\log}(\cx)$ with $p^-\in(1,\fz)$ and $s\in(0,\infty)$.
Then $f\in \www$ if and only if $f\in \vlp$ and there exists
a function $h\in \vlp$ such that, for all balls $B\subset\cx$ and $\mu$-a.e. $x\in B$,
\begin{equation}
\lf|f(x)-\frac{1}{\mu(B)}\int_{B} f(z)\,d\mu(z)\r|\le r(B)^sh(x).
\label{eq-17}
\end{equation}
 Moreover, it holds true that
$$\|f\|_{\www}\sim \|f\|_{\vlp}+\inf_{h}\|h\|_{\vlp},$$
where the infimum is taken over all functions $h$ satisfying \eqref{eq-17} and the implicit
constants are independent of $f$.
\end{thm}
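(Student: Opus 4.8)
The plan is to prove the two implications of the equivalence while tracking the implicit constants, so that the stated norm comparison drops out. Throughout I write $f_B:=\frac{1}{\mu(B)}\int_B f\,d\mu$ for a ball $B=B(x_B,r(B))$, and I shall use repeatedly that $\rho(x,y)<2Ar(B)$ whenever $x,y\in B$, that $f\in\vlp$ implies $f\in L^1_{\loc}(\cx)$ (by H\"older's inequality for variable Lebesgue spaces) so that $\mu$-a.e.\ point of $\cx$ is a Lebesgue point of $f$, and — most importantly — the boundedness of $\cm$ on $\vlp$ from Remark \ref{rem-1}(iii), which is exactly where the hypothesis $p^-\in(1,\fz)$ enters.

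For the necessity, I would start from $g\in\cd_s(f)$. Fixing a ball $B$, for $\mu$-a.e.\ $x\in B$ the Haj{\l}asz inequality $|f(x)-f(y)|\le\rho(x,y)^s[g(x)+g(y)]$ holds for $\mu$-a.e.\ $y\in B$; averaging it in $y$ over $B$ and using $\rho(x,y)<2Ar(B)$ together with $\frac{1}{\mu(B)}\int_B g\,d\mu\le\cm(g)(x)$ for $x\in B$ yields $|f(x)-f_B|\le(2A)^sr(B)^s[g(x)+\cm(g)(x)]$. Hence $h:=(2A)^s[g+\cm(g)]$, which is independent of $B$, satisfies \eqref{eq-17}; by Remark \ref{rem-1}(iii) it lies in $\vlp$ with $\|h\|_\vlp\ls\|g\|_\vlp$, and taking the infimum over $g\in\cd_s(f)$ gives $\|f\|_\vlp+\inf_h\|h\|_\vlp\ls\|f\|_\www$.

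For the sufficiency I would use a dyadic telescoping argument. Suppose $h\in\vlp$ satisfies \eqref{eq-17}, and fix two points $x,y$ outside a suitable $\mu$-null set — namely Lebesgue points of $f$ at which $\cm(h)<\fz$; put $r:=\rho(x,y)$ and let $C=C(A)$ be a sufficiently large constant. Telescoping along $B_k:=B(x,2^{-k}Cr)$, $k\ge0$: applying \eqref{eq-17} on $B_k$ and using $x\in B_{k+1}$ gives $|f_{B_{k+1}}-f_{B_k}|\le r(B_k)^s\frac{1}{\mu(B_{k+1})}\int_{B_{k+1}}h\,d\mu\le(2^{-k}Cr)^s\cm(h)(x)$. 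Since $s>0$ this series is summable and $f_{B_k}\to f(x)$, so $|f(x)-f_{B(x,Cr)}|\ls r^s\cm(h)(x)$, and symmetrically $|f(y)-f_{B(y,Cr)}|\ls r^s\cm(h)(y)$. Because $B(x,Cr)$ and $B(y,Cr)$ both contain $x$ and are contained in $\wz B:=B(x,2ACr)$, the same one-line averaging via \eqref{eq-17} on $\wz B$ bounds $|f_{B(x,Cr)}-f_{\wz B}|+|f_{\wz B}-f_{B(y,Cr)}|\ls r^s\cm(h)(x)$. Adding the four estimates gives $|f(x)-f(y)|\ls\rho(x,y)^s[\cm(h)(x)+\cm(h)(y)]$ for $\mu$-a.e.\ $x,y\in\cx$, so a suitable constant multiple $g$ of $\cm(h)$ lies in $\cd_s(f)$; by Remark \ref{rem-1}(iii), $g\in\vlp$ with $\|g\|_\vlp\ls\|h\|_\vlp$. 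Therefore $f\in\www$ with $\|f\|_\www\ls\|f\|_\vlp+\|h\|_\vlp$, and taking the infimum over admissible $h$ completes both the equivalence and the norm comparison.

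I expect the main obstacle to be the sufficiency: \eqref{eq-17} is only assumed to hold for a.e.\ point \emph{inside each individual ball}, so it cannot be fed directly into Definition \ref{de-4}, which demands a pairwise inequality valid a.e.\ on $\cx\times\cx$. The Lebesgue differentiation theorem on $\cx$ together with the dyadic telescoping above is what bridges this gap, and its convergence genuinely uses $s>0$; moreover, in both directions one must replace $h$ (resp.\ $g$) by $\cm(h)$ (resp.\ $\cm(g)$), whose $\vlp$-norm is controlled only because $p^-\in(1,\fz)$, so the global log-H\"older continuity of $p(\cdot)$ enters precisely through the maximal inequality of Remark \ref{rem-1}(iii).
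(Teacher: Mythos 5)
Your proof is correct, but the sufficiency direction is heavier than it needs to be and differs genuinely from the paper's. The necessity step matches the paper's: average the Haj{\l}asz inequality in $y$ over $B$, bound $\rho(x,y)^s$ by a constant times $r(B)^s$, and upgrade $g$ to $\cm(g)$ via Remark~\ref{rem-1}(iii) (the paper uses $g(x)+\cm(g)(x)\leq 2\cm(g)(x)$ a.e., you keep $g+\cm(g)$; same thing).

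For the sufficiency the paper does something much shorter: from \eqref{eq-17} applied at $x$ and at $y$ in the \emph{same} ball $B$, it subtracts to get $|f(x)-f(y)|\leq r(B)^s[h(x)+h(y)]$ for a.e.\ $x,y\in B$, and then chooses $B$ containing both points with $r(B)$ comparable to $\rho(x,y)$. The resulting Haj{\l}asz gradient is (a fixed multiple of) $h$ itself, so the maximal inequality is \emph{not} used in this direction at all and no Lebesgue-differentiation or telescoping is needed. You instead telescope $f_{B_k}\to f(x)$ along dyadic balls and glue through a large ball $\wz B$, producing $\cm(h)$ as the gradient; this is a valid alternative, but it invokes $\|\cm(h)\|_\vlp\ls\|h\|_\vlp$ a second time and recreates, essentially, the paper's proof of Theorem~\ref{thm-3}, where the weaker sharp-maximal hypothesis really does force a telescoping argument. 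The concern you raise — that \eqref{eq-17} is a ball-by-ball a.e.\ statement rather than an a.e.\ pairwise statement — is legitimate in spirit but is handled in the paper's approach exactly as in yours, by restricting to a countable family of balls (rational radii, centers in a countable dense set) so that the exceptional null sets can be unioned; your heavier machinery is not required to bridge that gap. In a quasi-metric space both arguments pick up harmless constants depending on $A$, so the stated norm equivalence $\|f\|_\www\sim\|f\|_\vlp+\inf_h\|h\|_\vlp$ comes out the same either way.
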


Moreover, we can also characterize the space $\www$ via the following sharp
maximal function, which is the second main result of this article.
For any $u,s\in(0,\fz)$, we define
the sharp maximal function by setting, for any $f\in L_{\loc}^1(\cx)$ and $x\in\cx$,
\begin{equation}\label{eq-7}
f_{u}^s(x):=\sup_{t\in(0,\fz)}t^{-s}
\lf\{\frac{1}{\mu(B(x,t))}\int_{B(x,t)}\lf|f(y)-\frac{1}{\mu(B(x,t))}\int_{B(x,t)}f(z)\,
d\mu(z)\r|^u\,d\mu(y)\r\}^{1/u}.
\end{equation}

\begin{thm}\label{thm-3}
Let $\cx$ be a space of homogeneous type, $p(\cdot)\in C^{\log}(\cx)$
with $p^-\in(1,\fz)$ and $s\in(0,\infty)$. Then $f\in\www$ if
and only if $f\in\vlp$ and $f_{u}^s\in\vlp$ with $u\in[1,p^-)$. Moreover, it holds
true that
$$\|f\|_\www\sim\|f\|_\vlp+\|f_{u}^s\|_\vlp$$
with the implicit constants being independent of $f$.
\end{thm}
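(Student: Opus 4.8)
The plan is to prove the two inclusions separately, routing the sufficiency direction through Theorem \ref{thm-1}; the special value $u\in[1,p^-)$ will be used only to ensure that the rescaled exponent $p(\cdot)/u$ still satisfies the hypotheses of Remark \ref{rem-1}(iii), i.e. $p(\cdot)/u\in C^{\log}(\cx)$ (dividing by a constant $\ge1$ preserves both log-H\"older conditions) with $(p(\cdot)/u)^-=p^-/u>1$. Throughout, $A$ denotes the quasi-metric constant in \eqref{eq-2}.

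\emph{Necessity.} Let $f\in\www$ and $g\in\cd_s(f)$. Fix $x\in\cx$ and $B=B(x,t)$. Since $\rho(y,z)<2At$ whenever $y,z\in B$, the Haj\l asz inequality gives, for $\mu$-a.e. $y\in B$,
\[
\lf|f(y)-\frac{1}{\mu(B)}\int_Bf\,d\mu\r|\le\frac{1}{\mu(B)}\int_B\rho(y,z)^s[g(y)+g(z)]\,d\mu(z)\ls t^s\lf[g(y)+\frac{1}{\mu(B)}\int_Bg\,d\mu\r].
\]
Raising to the power $u\ (\ge1)$, averaging over $B$, and applying Jensen's inequality to the average of $g$, one gets $f_u^s(x)\ls[\cm(g^u)(x)]^{1/u}$ for $\mu$-a.e. $x$. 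Since $p(\cdot)/u\in C^{\log}(\cx)$ with $(p(\cdot)/u)^->1$, Remark \ref{rem-1}(iii) makes $\cm$ bounded on $L^{p(\cdot)/u}(\cx)$, so by Remark \ref{rem-1}(ii)
\[
\|f_u^s\|_\vlp\ls\|[\cm(g^u)]^{1/u}\|_\vlp=\|\cm(g^u)\|_{L^{p(\cdot)/u}(\cx)}^{1/u}\ls\|g^u\|_{L^{p(\cdot)/u}(\cx)}^{1/u}=\|g\|_\vlp.
\]
Taking the infimum over $g\in\cd_s(f)$ yields $\|f\|_\vlp+\|f_u^s\|_\vlp\ls\|f\|_\www$.

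\emph{Sufficiency.} Assume $f\in\vlp$ and $f_u^s\in\vlp$. I will show that $h:=Cf_u^s$ (with $C$ depending only on $A$, $C_\mu$, $s$, $u$) satisfies \eqref{eq-17}, so that Theorem \ref{thm-1} gives $f\in\www$ with $\|f\|_\www\ls\|f\|_\vlp+\|h\|_\vlp\ls\|f\|_\vlp+\|f_u^s\|_\vlp$; combined with the necessity bound this is the asserted equivalence. Fix $B=B(x_0,t)$ and a point $x\in B$ that is a Lebesgue point of $f$ — $\mu$-a.e. $x$ has this property by the Lebesgue differentiation theorem on spaces of homogeneous type, since $\mu$ is doubling and $f\in L_{\loc}^1(\cx)$. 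Along the balls $B_i:=B(x,2^{1-i}At)$, $i\ge0$, Hölder's inequality with exponent $u$, the doubling property $\mu(B_i)\le C_\mu\mu(B_{i+1})$, and the definition of $f_u^s$ give
\[
|f_{B_{i+1}}-f_{B_i}|\le\frac{1}{\mu(B_{i+1})}\int_{B_{i+1}}|f-f_{B_i}|\,d\mu\ls\lf\{\frac{1}{\mu(B_i)}\int_{B_i}|f-f_{B_i}|^u\,d\mu\r\}^{1/u}\ls(2^{-i}t)^sf_u^s(x).
\]
Since $s>0$ the geometric series converges, so $|f(x)-f_{B_0}|\ls t^sf_u^s(x)$; and because $x\in B$, the balls $B$ and $B_0=B(x,2At)$ are mutually comparable via \eqref{eq-16}, whence $|f_{B_0}-f_B|\ls\{\mu(B_0)^{-1}\int_{B_0}|f-f_{B_0}|^u\,d\mu\}^{1/u}\ls t^sf_u^s(x)$ as well. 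Adding these produces $|f(x)-f_B|\ls t^sf_u^s(x)$, which is \eqref{eq-17} with $h=Cf_u^s$.

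\emph{Main obstacle.} The genuinely technical part is the chaining step in the sufficiency direction: one must propagate the quasi-metric constant $A$ through every ball inclusion, invoke \eqref{eq-16} repeatedly to compare measures of comparable balls centered at different points, and justify convergence of the telescoping sum to $f(x)$ at $\mu$-a.e. $x$. Everything else is soft once Remark \ref{rem-1}(ii)--(iii) and Theorem \ref{thm-1} are available; in particular $u\ge1$ is what makes the power-mean inequalities usable and $u<p^-$ is precisely what lets $\cm$ act boundedly on $L^{p(\cdot)/u}(\cx)$, so the restriction on $u$ is natural for this method.
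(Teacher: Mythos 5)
Your proof is correct and rests on the same core machinery as the paper's — H\"older and Jensen with exponent $u\ge1$, the doubling inequality \eqref{eq-16}, a telescoping chain of balls shrinking to a Lebesgue point, and the boundedness of $\cm$ on $L^{p(\cdot)/u}(\cx)$, which is where $u<p^-$ enters — but it is organized a bit differently, and the difference is worth noting. In the necessity direction you work directly with a Haj{\l}asz gradient $g\in\cd_s(f)$, which forces an extra Jensen step ($g_B^u\le(g^u)_B$) before you land on $f_u^s(x)\ls[\cm(g^u)(x)]^{1/u}$; the paper instead feeds Theorem~\ref{thm-1} into necessity as well, starting from the function $h$ of \eqref{eq-17}, which bounds $|f(y)-f_B|$ pointwise and hence gives $f_u^s(x)\le[\cm(h^u)(x)]^{1/u}$ without any averaging tricks. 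In the sufficiency direction your route is the more modular of the two: you verify that $h:=Cf_u^s$ satisfies \eqref{eq-17} by telescoping along a single chain $B_i=B(x,2^{1-i}At)$ centered at the Lebesgue point $x\in B$, then invoke Theorem~\ref{thm-1} to conclude. The paper, by contrast, re-derives the Haj{\l}asz gradient inequality from scratch: it fixes a second Lebesgue point $y\in B(x,t/(2A))$, chains along $B_j=B(y,2^{-j}t/A)$, and must therefore track both $f_u^s(x)$ and $f_u^s(y)$ in the estimate before concluding $|f(x)-f(y)|\ls\rho(x,y)^s[f_u^s(x)+f_u^s(y)]$. Both arguments are valid and use identical ingredients; leaning on Theorem~\ref{thm-1} in both directions, as you do, buys a one-chain, one-Lebesgue-point proof of sufficiency at the modest cost of the extra Jensen step in necessity.
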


Finally, when $\cx$ is a bounded space of homogeneous type,
we have the following characterization of the $\www$ via integrals.

\begin{thm}\label{thm-2}
Let $\cx$ be a space of homogeneous type satifying
$\mu(\cx)<\fz$, $p(\cdot)\in C^{\log}(\cx)$ with $p^-\in(1,\fz)$
and $s\in(0,\infty).$ Then $f\in \www$ if and only if $f\in \vlp$ and
there exist some constant $q\in[1,p^-)$ and a non-negative
function $\varphi \in \vlp$ such that, for every ball $B\subset\cx$ and a.e. $x\in B$,
\begin{equation}
\frac{1}{\mu(B)}\int_{B}\lf|f(x)-\frac{1}{\mu(B)}\int_{B} f(y)\,d\mu(y)\r|\,
d\mu(x)\le r(B)^s\lf(\frac{1}{\mu(B)}\int_{B}\varphi(x)^q\,d\mu(x)\r)^{1/q}.
\label{eq-18}
\end{equation}
Moreover, it holds true that
$$\|f\|_{\www}\sim \|f\|_{\vlp}+\inf_{\varphi}\|\varphi\|_{\vlp},$$
where the infimum is taken over all functions $\varphi$ satisfying \eqref{eq-18}
and the implicit constants are independent of $f$.
\end{thm}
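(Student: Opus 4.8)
The plan is to deduce both implications from the sharp--maximal--function characterization already obtained in Theorem~\ref{thm-3}, the only additional ingredient being the boundedness of $\cm$ on variable Lebesgue spaces from Remark~\ref{rem-1}(iii). For the necessity, I would start from $f\in\www$, fix $g\in\cd_s(f)$ whose $\vlp$-norm is within a factor $2$ of $\inf_{g'\in\cd_s(f)}\|g'\|_\vlp$, and note that, for a ball $B=B(x_B,r(B))$ and any $x,y\in B$, the quasi-triangle inequality \eqref{eq-2} gives $\rho(x,y)\le 2Ar(B)$. Inserting this into the $s$-order Haj{\l}asz inequality $|f(x)-f(y)|\le\rho(x,y)^s[g(x)+g(y)]$, averaging in $y$ over $B$, and then averaging once more in $x$, the left side of \eqref{eq-18} is seen to be at most $2(2A)^sr(B)^s\,\frac{1}{\mu(B)}\int_B g\,d\mu$. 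Hence \eqref{eq-18} holds with $q=1$ and $\varphi:=2(2A)^sg\in\vlp$, which gives $\inf_\varphi\|\varphi\|_\vlp\ls\|f\|_\www$.

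For the sufficiency, suppose $f\in\vlp$ satisfies \eqref{eq-18} for some constant $q\in[1,p^-)$ and some non-negative $\varphi\in\vlp$. Specialising \eqref{eq-18} to the balls $B=B(x,t)$, $t\in(0,\fz)$, which are centred at $x$, and dividing through by $t^s$ yields, for a.e.\ $x\in\cx$,
\begin{equation*}
f_1^s(x)\le\sup_{t\in(0,\fz)}\lf(\frac{1}{\mu(B(x,t))}\int_{B(x,t)}\varphi(z)^q\,d\mu(z)\r)^{1/q}\le\lf(\cm(\varphi^q)(x)\r)^{1/q}.
\end{equation*}
Since $p(\cdot)\in C^{\log}(\cx)$ forces $p(\cdot)/q\in C^{\log}(\cx)$ with $(p(\cdot)/q)^-=p^-/q>1$, Remark~\ref{rem-1}(iii) makes $\cm$ bounded on $L^{p(\cdot)/q}(\cx)$; combining this with Remark~\ref{rem-1}(ii) --- applicable because $q<p^-$ --- gives $\|(\cm(\varphi^q))^{1/q}\|_\vlp=\|\cm(\varphi^q)\|_{L^{p(\cdot)/q}(\cx)}^{1/q}\ls\|\varphi^q\|_{L^{p(\cdot)/q}(\cx)}^{1/q}=\|\varphi\|_\vlp$, so that $f_1^s\in\vlp$. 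Because $p^->1$, the exponent $u=1$ is admissible in Theorem~\ref{thm-3}, which therefore yields $f\in\www$ and $\|f\|_\www\sim\|f\|_\vlp+\|f_1^s\|_\vlp\ls\|f\|_\vlp+\|\varphi\|_\vlp$; taking the infimum over admissible $\varphi$ and combining with the necessity part finishes the norm equivalence.

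The hypothesis $\mu(\cx)<\fz$ enters through Remark~\ref{rem-1}(iv): since $q<p^-\le p(\cdot)$ on $\cx$, it guarantees that the datum $\varphi\in\vlp$ also lies in the constant-exponent space $L^q(\cx)$, which is what allows the constant $q$ appearing in \eqref{eq-18} to be handled by the ordinary maximal-function machinery underlying Theorem~\ref{thm-3}. The step I expect to be the main obstacle is precisely the sufficiency: turning the single scalar inequality \eqref{eq-18}, which controls only an $L^1$-average of $f-f_B$, into membership in $\www$. This works because the strict inequality $q<p^-$ --- not merely $q\le p^-$ --- makes $\cm$ bounded on $L^{p(\cdot)/q}(\cx)$; most of the quantitative content is already packaged in Theorem~\ref{thm-3}, and a direct proof --- telescoping $f_{B(x,2^{-j}r)}\to f(x)$ at Lebesgue points and comparing two such chains at $x$ and $y$ --- would meet the same difficulty of keeping the resulting gradient $g\sim(\cm(\varphi^q))^{1/q}$ inside $\vlp$.
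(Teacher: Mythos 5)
Your argument is correct, and it takes a genuinely different (and shorter) route than the paper. The paper introduces an auxiliary space $\aaa$ (built from \eqref{eq-18}), proves via a lemma quoted from \cite{Ya03} (Lemma~\ref{lem-2}) that the averaged Poincar\'e inequality \eqref{eq-18} self-improves to the pointwise bound $|f(x)-f_B|\le Cr(B)^s[\cm(\varphi^q)(x)]^{1/q}$, and then transfers the problem to Theorem~\ref{thm-1}. To satisfy the hypothesis $g\in L^q(\cx)$ of that external lemma, the paper needs Remark~\ref{rem-1}(iv), which is exactly where $\mu(\cx)<\fz$ enters. You bypass Lemma~\ref{lem-2} entirely: you observe that specialising \eqref{eq-18} to centred balls $B(x,t)$ immediately gives $f_1^s(x)\le[\cm(\varphi^q)(x)]^{1/q}$, and then hand the problem to Theorem~\ref{thm-3} with $u=1\in[1,p^-)$. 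For necessity you also skip the detour through $\bbb$ and \eqref{eq-17}, averaging the Haj{\l}asz inequality directly to get \eqref{eq-18} with $q=1$; the paper instead first invokes Theorem~\ref{thm-1} to produce an $h$ with the pointwise bound and then averages and applies H\"older to $h$. Both are fine. Your route buys more: since Theorem~\ref{thm-3} already packages the telescoping argument, you avoid the external Poincar\'e self-improvement lemma, which is precisely what forced the boundedness hypothesis on the space.

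One point to fix: your closing paragraph misattributes the role of $\mu(\cx)<\fz$. Your own proof never uses Remark~\ref{rem-1}(iv) and never needs $\varphi\in L^q(\cx)$; the finiteness of $\mu$ appears nowhere in your chain of implications. What you actually demonstrate is that, following this route, the conclusion of Theorem~\ref{thm-2} holds without the assumption $\mu(\cx)<\fz$ at all --- that assumption is an artifact of the paper's reliance on Lemma~\ref{lem-2}. You should either delete that paragraph or replace it with a remark noting that the boundedness hypothesis can be dropped when one deduces the result from Theorem~\ref{thm-3} rather than from Theorem~\ref{thm-1} via Lemma~\ref{lem-2}.
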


Theorems \ref{thm-1}-\ref{thm-2} are proved in Section \ref{r-pf}.
In particular, Theorem \ref{thm-2} is the integral form of Theorem \ref{thm-1},
for which we first establish the relationship between the two results
 (see Lemma \ref{lem-3} below) and then, by using the equivalence of them on a bounded
 space of homogeneous type, prove Theorem \ref{thm-2}.
 The key tools used to prove them are the boundedness of the maximal
operator and lebesgue differentiation theorem.

 Throughout this article, let $\nn:=\{1,2,\dots\}$ and
${\rm diam}\ \cx:= \sup\{\rho(x,y):x,y\in\cx\}$. For any $a\in(0,\fz)$ and ball
$B(x,r)\subset\cx$, let $aB(x,r):=\{y\in\cx:\ \rho(x,y)<ar\}$.
We denote by $C$ a positive constant which is independent of the main parameters,
but it may vary from line to line. We denote by $f\lesssim g$ (resp.,$f\gtrsim g$)
if $f\le Cg$ (resp., $f\ge Cg$) for a positive constant $C$, and $f\sim  g$
amounts to $f\gtrsim g\gtrsim f$.

\section{ Proofs of the main results\label{r-pf}}
In this section, we prove our main results and begin with the proof of Theorem
\ref{thm-1}. To this end, for any $p(\cdot)\in \cp(\cx)$ and $s\in(0,\fz)$, let
$$\bbb:=\lf\{f\in\vlp:\ \|f\|_\bbb=\|f\|_\vlp+\inf_{h}\|h\|_\vlp<\fz \r\},$$
where the infimum is taken over all functions $h$ satisfying that, for all balls
$B\subset\cx$ and a.e. $x\in B$,
\begin{equation}\label{eq-2.0x}
\lf|f(x)-\frac{1}{\mu(B)}\int_{B} f(z)\,d\mu(z)\r|\le r(B)^sh(x).
\end{equation}

\begin{proof}[Proof of Theorem \ref{thm-1}]
Let $f\in \www$. Then by definition, we know that, for any given $\varepsilon >0$, there is a non-negative
function $g\in \vlp$ such that for a.e. $x,y\in \cx$,
\begin{equation}\label{eq-2.1x}
|f(x)-f(y)|\leq
\rho(x,y)^s[g(x)+g(y)]
\end{equation}
and
\begin{equation}\label{eq-2.2x}
||f||_\vlp+||g||_\vlp<||f||_\www+\varepsilon.
\end{equation}
By \eqref{eq-2.1x}, we deduce that, for any ball $B\subset\cx$ and a.e. $x\in B$,
\begin{align*}
\lf|f(x)-\frac{1}{\mu(B)}\int_Bf(y)\,d\mu(y)\r|
&\leq \frac{1}{\mu(B)}\int_B \lf|f(x)-f(y)\r|\,d\mu(y)\\
&\leq 2^sr(B)^s\frac{1}{\mu(B)}\int_B\lf[g(x)+g(y)\r]\,d\mu(y)\\
&\leq 2^sr(B)^s\lf[g(x)+\cm (g)(x)\r]\\
&\leq 2^{s+1}r(B)^s \cm (g)(x).
\end{align*}
Therefore, $f\in\bbb$, and moreover, by Remark \ref{rem-1} (iii) and \eqref{eq-2.2x},
$$\|f\|_\bbb \lesssim \|f\|_\vlp+\|\cm (g)\|_\vlp
\lesssim \|f\|_\vlp+\|g\|_\vlp
\lesssim \|f\|_\www+\varepsilon.$$
Letting $\varepsilon\to 0$, we then conclude that $\|f\|_\bbb \lesssim \|f\|_\www.$

Conversely, let $f\in \bbb$. Then, for any given $\varepsilon>0$, there is
a non-negative function $h\in \vlp$ satisfying \eqref{eq-2.0x}
and
\begin{equation*}
\|f\|_\vlp+\|h\|_\vlp<\|f\|_\bbb+\varepsilon.
\end{equation*}
Moreover, we have, for any ball $B\subset\cx$ and a.e. $x,y\in B$,
$$\lf|f(x)-f(y)\r|\leq r(B)^s\lf[h(x)+h(y)\r],$$
which, together with choosing  suitable $B$, implies that for a.e. $x,y\in\cx$,
$$\lf|f(x)-f(y)\r|\leq \rho(x,y)^s\lf[h(x)+h(y)\r].$$
By this, we find that $f\in\www$ and
$$\|f\|_\www \leq \|f\|_\vlp+\|h\|_\vlp <\|f\|_\bbb+\varepsilon.$$
Letting $\varepsilon\to 0$, we conclude that $\|f\|_\www\leq\|f\|_\bbb.$
This completes the proof of Theorem \ref{thm-1}.
\end{proof}

Next we will prove the second main result by using Theorem \ref{thm-1}.
\begin{proof}[Proof of Theorem \ref{thm-3}]
Let $f\in\www$. By Theorem \ref{thm-1}, we find that for any given
$\varepsilon>0$, there exists a function $h\in\vlp$ satisfying \eqref{eq-2.0x}
and
\begin{equation}\label{eq-2.5s}
 \|f\|_\vlp+\|h\|_\vlp<\|f\|_\bbb+\varepsilon.
\end{equation}
By \eqref{eq-2.0x}, we deduce that, for any $x\in \cx$,
\begin{align*}
f_{u}^s(x)&=\sup_{t\in(0,\fz)}t^{-s}
\lf\{\frac{1}{\mu(B(x,t)}\int_{B(x,t)}
\lf|f(y)-\frac{1}{\mu(B(x,t))}\int_{B(x,t)}f(z)\,d\mu(z)\r|^u\,d\mu(y)\r\}^
{1/u}\\
&\leq \sup_{t\in(0,\fz)}\lf[\frac{1}{\mu(B(x,t))}\int_{B(x,t)}h(y)^u\,d\mu(y)\r]^{1/u} \\
&\leq [\cm(h^u)(x)]^{1/u},
\end{align*}
which, together with Remark \ref{rem-1} (iii) and \eqref{eq-2.5s}, implies that
\begin{align*}
\|f\|_\vlp+\|f_{u}^s\|_\vlp &\ls \|f\|_\vlp+\|[\cm(h^u)]^{1/u}\|_\vlp\\
&\lesssim \|f\|_\vlp+\|h\|_\vlp\\
&\lesssim\|f\|_\bbb+\varepsilon.
\end{align*}
Letting $\varepsilon\to0$, we get
$$\|f\|_\vlp+\|f_{u}^s\|_\vlp \lesssim\|f\|_\bbb\lesssim\|f\|_\www.$$

On the other hand, let $f\in\vlp$ and $f_{u}^s\in\vlp$.
Let $x$ be a given Lebesgue point of $f$. For any given $t\in(0,\fz)$, let
$y\in B(x,t/(2A))$ be a Lebesgue point of the function
$$f(v)-\frac{1}{\mu(B(x,t))}\int_{B(x,t)}
f(z)\,d\mu(z),$$
where $A$ is the constant mentioned in \eqref{eq-2}.
Denote by $B_0:=B(x,t)$ and $B_j:=B(y,2^{-j}t/A)$
for $j\in \nn$. Then by the Lebesgue differentiation theorem,
we obtain
\begin{align*}
\lf|f(y)-\frac{1}{\mu(B(x,t))}\int_{B(x,t)}f(z)\,d\mu(z)\r|&
=\lim_{j\to\infty}\frac{1}{\mu(B_j)}\int_{B_j}\lf|f(v)-\frac{1}{\mu(B(x,t))}\int_{B(x,t)}
f(z)\,d\mu(z)\r|\,d\mu(v)\\
&\leq \varlimsup_{j\to\infty}\frac{1}{\mu(B_j)}\int_{B_j}\lf|f(v)-\frac{1}{\mu(B_j)}\int_{B_j} f(z)\,d\mu(z)\r|\,d\mu(v)\\
&\quad+\varlimsup_{j\to\infty}\lf|\frac{1}{\mu(B_j)}\int_{B_j}f(z)\,
d\mu(z)-\frac{1}{\mu(B_0)}\int_{B_0}f(z)\,d\mu(z)\r|\\
&= J_1+J_2.
\end{align*}
For $J_1$, it follows from the H\"older inequality that
\begin{align*}
J_1 &=\varlimsup_{j\to\infty}\frac{1}{\mu(B_j)}\int_{B_j}\lf|f(v)-\frac{1}
{\mu(B_j)}\int_{B_j}
f(z)\,d\mu(z)\r|\,d\mu(v)\\
&\leq \varlimsup_{j\to\infty}\lf\{\frac{1}{\mu(B_j)}\int_{B_j}\lf|f(v)-\frac{1}
{\mu(B_j)}\int_{B_j}
f(z)\,d\mu(z)\r|^u\,d\mu(v)\r\}^{1/u}\\
&\leq \varlimsup_{j\to\infty}2^{-js}A^{-s}t^sf_{u}^s(y)
=0.
\end{align*}
For $J_2$, we first have $B_1\subset
B_0\subset 4A^2B_1$, which together with \eqref{eq-16} implies $\mu(B_0)\sim \mu(B_1)$.
Thus, by the H\"older inequality, we find that
\begin{align*}
J_2&\leq \varlimsup_{j\to\infty} \sum_{l=1}^{j-1}\lf|\frac{1}{\mu(B_{l+1})}
\int_{B_{l+1}}f(z)\,d\mu(z)-\frac{1}{\mu(B_{l})}\int_{B_l}
f(z)\,d\mu(z)\r|\\
&\quad\quad+\lf|\frac{1}{\mu(B_{1})}\int_{B_1}f(z)\,d\mu(z)-\frac{1}
{\mu(B_{0})}\int_{B_0} f(z)\,d\mu(z)\r|\\
&\lesssim \sum_{l=1}^{\infty}\frac{1}{\mu(B_{l})}\int_{B_l}\lf
|f(v)-\frac{1}{\mu(B_{l})}\int_{B_l}
f(z)\,d\mu(z)\r|\,d\mu(v)\\
&\quad\quad+\frac{1}{\mu(B_{0})}\int_{B_0}
\lf|f(v)-\frac{1}{\mu(B_{0})}\int_{B_0} f(z)\,d\mu(z)\r|\,d\mu(v)\\
&\lesssim \sum_{l=1}^{\infty} \lf\{\frac{1}{\mu(B_{l})}\int_{B_l}\lf|f(v)-\frac{1}{\mu(B_{l})}\int_{B_l} f(z)\,
d\mu(z)\r|^u\,d\mu(v)\r\}^\frac{1}{u}\\
&\quad\quad +\lf\{\frac{1}{\mu(B_{0})}\int_{B_0}
\lf|f(v)-\frac{1}{\mu(B_{0})}\int_{B_0}f(z)\,d\mu(z)\r|^u\,d\mu(v)\r\}^\frac{1}{u}\\
&\lesssim \sum_{l=1}^{\infty}2^{-ls}t^sA^{-s} f_{u}^s(y)+t^sf_{u}^s(x)
\ls t^s\lf[f_{u}^s(y)+f_{u}^s(x)\r].
\end{align*}
Therefore, we have
$$\lf|f(y)-f(x)\r|\lesssim t^s\lf[f_{u}^s(y)+f_{u}^s(x)\r],\quad a.e.\,y\in B(x,t/2A).$$
Since for any $x,\ y\in \cx$ there exists $t\in(0,\fz)$ such that $y\in B(x,t/(2A))$,
it follows that
$$\lf|f(y)-f(x)\r|\lesssim \rho(x,y)^s\lf[f_{u}^s(y)+f_{u}^s(x)\r].$$
By this, we conclude that $f\in\www$ and $\|f\|_\www\lesssim\|f\|_\vlp+\|f_{u}^s\|_\vlp.$
This completes the proof of Theorem \ref{thm-3}.
\end{proof}

To prove Theorem \ref{thm-2}, we use the following notion. For any
$p(\cdot)\in \cp(\cx)$ and $s\in(0,\fz)$,
$$\aaa:=\lf\{f\in\vlp:\ \|f\|_\bbb=\|f\|_\vlp+\inf_{\varphi}\|\varphi\|_\vlp<\fz \r\},$$
where the infimum is taken over all functions $\varphi$ satisfying that for every ball $B\subset\cx$ and $q\in[1,p^-)$,
\begin{equation}\label{eq-2.5x}
\frac{1}{\mu(B)}\int_{B}\lf|f(x)-\frac{1}{\mu(B)}\int_{B} f(y)\,d\mu(y)\r|\,
d\mu(x)\le r(B)^s\lf(\frac{1}{\mu(B)}\int_{B}\varphi(x)^q\,d\mu(x)\r)^{1/q}.
\end{equation}
Moreover, we need two technical lemmas. The following one comes from
\cite[Theorem 1.1]{Ya03}.

\begin{lem}\label{lem-2}
Let $q\in[1,\fz)$ and $s\in(0,\fz)$. If $f\in L_{\loc}(\cx)$
and there are a non-negative function $g\in L^q(\cx)$ and some $\lambda\in[1,\fz)$
such that, for every ball $B\subset \cx$ and a.e. $x\in B$, the
Poincar\'{e} inequality
\begin{equation*}
\frac{1}{\mu(B)}\int_B\lf|f(x)-\frac{1}{\mu(B)}\int_{B}f(z)\,d\mu(z)\r|\,d\mu(x)\leq
Cr(B)^s\lf(\frac{1}{\mu(\lambda B)}\int_{\lambda
B}g(x)^q\,d\mu(x)\r)^{1/q}
\end{equation*}
holds true. Then for any ball $B\subset \cx$ and a.e. $x\in B$,
$$\lf|f(x)-\frac{1}{\mu(B)}\int_{B}f(z)\,d\mu(z)\r|\leq Cr(B)^s[\cm(g^q)(x)]^{1/q},$$
where $C$ is independent of $x$ and $B$.
\end{lem}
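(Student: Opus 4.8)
The plan is to prove the lemma by the standard telescoping argument along a chain of balls that shrink to a Lebesgue point of $f$, estimating each ``jump'' of the averages of $f$ by the hypothesized Poincar\'e inequality and then absorbing the resulting averages of $g^q$ into $\cm(g^q)$ evaluated at that point. Since $\mu$ is doubling, the Lebesgue differentiation theorem is available on $\cx$, so $\mu$-a.e. $x\in\cx$ is a Lebesgue point of $f$, and it suffices to establish the pointwise bound at every Lebesgue point $x$ lying in a fixed ball $B=B(x_0,r)$. First I would set up the chain: using \eqref{eq-2} (and symmetry of $\rho$) one gets $B\st B_1:=B(x,2Ar)$ and, conversely, $B_1\st(2A^2+A)B$, so that \eqref{eq-16} yields $\mu(B)\sim\mu(B_1)$ with constants depending only on $A$ and $C_\mu$; then for $j\in\nn$ put $B_j:=B(x,2^{2-j}Ar)$, so that $B_{j+1}\st B_j=2B_{j+1}$, whence $\mu(B_j)\sim\mu(B_{j+1})$, and $\bigcap_jB_j=\{x\}$, so that (writing $f_E:=\frac1{\mu(E)}\int_Ef\,d\mu$) the Lebesgue differentiation theorem gives $f_{B_j}\to f(x)$ as $j\to\fz$.

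Next, using this convergence, I would telescope
$$\lf|f(x)-f_B\r|\le\lf|f_B-f_{B_1}\r|+\sum_{j=1}^{\fz}\lf|f_{B_j}-f_{B_{j+1}}\r|.$$
To estimate a generic jump, since $B_{j+1}\st B_j$ and $\mu(B_j)\sim\mu(B_{j+1})$,
$$\lf|f_{B_j}-f_{B_{j+1}}\r|\le\frac1{\mu(B_{j+1})}\int_{B_{j+1}}\lf|f(y)-f_{B_j}\r|\,d\mu(y)\ls\frac1{\mu(B_j)}\int_{B_j}\lf|f(y)-f_{B_j}\r|\,d\mu(y),$$
and the hypothesized Poincar\'e inequality applied to $B_j$ bounds the right-hand side by $Cr(B_j)^s\big(\frac1{\mu(\lz B_j)}\int_{\lz B_j}g^q\,d\mu\big)^{1/q}$. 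The crucial point is that $\lz B_j=B(x,\lz 2^{2-j}Ar)$ is \emph{centered at} $x$, so $\frac1{\mu(\lz B_j)}\int_{\lz B_j}g^q\,d\mu\le\cm(g^q)(x)$; together with $r(B_j)=2^{2-j}Ar\ls2^{-j}r$ this gives $|f_{B_j}-f_{B_{j+1}}|\ls2^{-js}r^s[\cm(g^q)(x)]^{1/q}$. The term $|f_B-f_{B_1}|$ is treated identically, using $B\st B_1$, $\mu(B)\sim\mu(B_1)$, $r(B_1)\sim r$, and that $\lz B_1$ is centered at $x$.

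Summing the geometric series $\sum_{j\ge1}2^{-js}<\fz$ (here $s>0$ enters) then yields $|f(x)-f_B|\ls r^s[\cm(g^q)(x)]^{1/q}=r(B)^s[\cm(g^q)(x)]^{1/q}$, with a constant depending only on $A$, $C_\mu$, $\lz$, $q$, $s$ and the constant $C$ in the hypothesis, hence independent of $x$ and $B$. Since $\mu$-a.e.\ $x$ is a Lebesgue point, this gives the claim for a.e.\ $x\in B$.

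I expect no serious obstacle: the argument is essentially routine. The only points requiring care are the bookkeeping with the quasi-triangle constant $A$ needed to guarantee $B\st B_1$ while keeping the whole chain $\{B_j\}$ and the dilates $\{\lz B_j\}$ centered at $x$, and the attendant measure comparisons $\mu(B)\sim\mu(B_1)$ and $\mu(B_j)\sim\mu(B_{j+1})$ via \eqref{eq-16}; the observation that centering the dilated balls at $x$ makes the passage to $\cm(g^q)(x)$ immediate is what keeps the proof short.
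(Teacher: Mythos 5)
The paper does not prove Lemma~\ref{lem-2}; it simply cites \cite[Theorem 1.1]{Ya03}. Your proof is a correct self-contained argument and is exactly the standard chaining proof one would find there: center a shrinking chain $B_j=B(x,2^{2-j}Ar)$ at the Lebesgue point $x$, telescope $f(x)-f_B$ against the averages $f_{B_j}$, bound each jump $|f_{B_j}-f_{B_{j+1}}|$ by the hypothesized Poincar\'e inequality on $B_j$, absorb the average of $g^q$ over $\lambda B_j$ (which contains $x$) into $\cm(g^q)(x)$, and sum the geometric series using $s>0$. The bookkeeping you flag is handled correctly: $B\st B_1=B(x,2Ar)\st(2A^2+A)B$ gives $\mu(B)\sim\mu(B_1)$ by doubling, and $B_{j+1}\st B_j=2B_{j+1}$ gives $\mu(B_j)\sim\mu(B_{j+1})$, so each jump is controlled by $2^{-js}r^s[\cm(g^q)(x)]^{1/q}$. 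One small remark: since the paper's $\cm$ is the \emph{uncentered} maximal operator, you only need $x\in\lambda B_j$ rather than $\lambda B_j$ being centered at $x$; but centering the chain at $x$ is still essential for the Lebesgue differentiation step $f_{B_j}\to f(x)$ and guarantees $x\in\lambda B_j$ automatically, so your emphasis is well placed.
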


Using Lemma \ref{lem-2}, we can establish the relation between the space
$\aaa$ and $\bbb$ on the bounded space of homogeneous type as follows.

\begin{lem}\label{lem-3}
Let $\cx$ be a space of homogeneous type satisfying
$\mu(\cx)<\fz$, $p(\cdot)\in C^{\log}(\cx)$  with $p^-\in(1,\fz)$ and $s\in(0,\fz)$.
Then $f\in\aaa$ if and only if $f\in\bbb$. Moreover, it holds true that
$\|f\|_{\aaa}\sim\|f\|_{\bbb},$
where the implicit constants are independent of $f$.
\end{lem}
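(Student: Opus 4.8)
The plan is to prove the two inclusions $\bbb\subset\aaa$ and $\aaa\subset\bbb$ separately, each with the corresponding norm bound, and then combine them. The easy direction is $\bbb\subset\aaa$: if $f\in\bbb$ with associated $h\in\vlp$ satisfying \eqref{eq-2.0x}, then for any ball $B$ and a.e.\ $x\in B$ we have $|f(x)-\frac{1}{\mu(B)}\int_B f\,d\mu|\le r(B)^s h(x)$, so averaging over $B$ gives
$$\frac{1}{\mu(B)}\int_B\lf|f(x)-\frac{1}{\mu(B)}\int_B f\,d\mu\r|\,d\mu(x)\le r(B)^s\frac{1}{\mu(B)}\int_B h(x)\,d\mu(x)\le r(B)^s\lf(\frac{1}{\mu(B)}\int_B h(x)^q\,d\mu(x)\r)^{1/q}$$
for any $q\in[1,p^-)$ by the H\"older inequality (with $\mu(\cx)<\fz$ not even needed here). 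Hence $\varphi:=h$ witnesses $f\in\aaa$ and $\|f\|_\aaa\le\|f\|_\bbb$, trivially.

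The substantive direction is $\aaa\subset\bbb$. Suppose $f\in\aaa$ with a non-negative $\varphi\in\vlp$ and some $q\in[1,p^-)$ satisfying \eqref{eq-2.5x}. This is precisely the Poincar\'e-type hypothesis of Lemma \ref{lem-2} (with $\lambda=1$ and $g=\varphi$), \emph{provided} we know $\varphi\in L^q(\cx)$; here is where the boundedness of $\cx$ enters. Since $\mu(\cx)<\fz$ and $q<p^-\le p(x)$, Remark \ref{rem-1}(iv) gives $\|\varphi\|_{L^q(\cx)}\lesssim\|\varphi\|_\vlp<\fz$ (one must be slightly careful: Remark \ref{rem-1}(iv) is stated for the variable-exponent norm $\|\cdot\|_{L^{q(\cdot)}}$ with constant exponent $q$, which coincides with a fixed multiple of the ordinary $L^q(\cx)$ norm when $\mu(\cx)<\fz$; alternatively invoke a direct H\"older embedding $L^{p(\cdot)}(\cx)\hookrightarrow L^q(\cx)$ on finite-measure spaces). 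Thus Lemma \ref{lem-2} applies and yields, for every ball $B$ and a.e.\ $x\in B$,
$$\lf|f(x)-\frac{1}{\mu(B)}\int_B f(z)\,d\mu(z)\r|\le C\,r(B)^s\lf[\cm(\varphi^q)(x)\r]^{1/q}.$$
Set $h:=C^{1/1}\lf[\cm(\varphi^q)\r]^{1/q}$; then $h$ satisfies \eqref{eq-2.0x}. It remains to bound $\|h\|_\vlp$. By Remark \ref{rem-1}(ii), $\|h\|_\vlp=\|\cm(\varphi^q)\|_{L^{p(\cdot)/q}(\cx)}^{1/q}$ up to the constant $C$, and since $p^-\in(1,\fz)$ and $q<p^-$ we have $(p(\cdot)/q)^-=p^-/q>1$; moreover $p(\cdot)/q\in C^{\log}(\cx)$ because $p(\cdot)\in C^{\log}(\cx)$. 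Hence Remark \ref{rem-1}(iii) gives $\|\cm(\varphi^q)\|_{L^{p(\cdot)/q}(\cx)}\lesssim\|\varphi^q\|_{L^{p(\cdot)/q}(\cx)}=\|\varphi\|_\vlp^q$, so $\|h\|_\vlp\lesssim\|\varphi\|_\vlp$. Combining, $f\in\bbb$ with $\|f\|_\bbb\lesssim\|f\|_\vlp+\|\varphi\|_\vlp$, and taking the infimum over admissible $\varphi$ (and an $\varepsilon$-approximation argument as in the earlier proofs) gives $\|f\|_\bbb\lesssim\|f\|_\aaa$.

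The main obstacle is the bookkeeping around the exponent $p(\cdot)/q$: one must check that it lies in $C^{\log}(\cx)$ with lower bound exceeding $1$ so that the maximal-operator bound of Remark \ref{rem-1}(iii) is legitimately applicable, and one must correctly route the hypothesis of Lemma \ref{lem-2} through the finite-measure embedding to secure $\varphi\in L^q(\cx)$ — which is exactly the point where the hypothesis $\mu(\cx)<\fz$ is indispensable. Everything else is the two H\"older inequalities and the homogeneity identity from Remark \ref{rem-1}(ii). After Lemma \ref{lem-3} is in hand, Theorem \ref{thm-2} follows by chaining the identifications $\aaa=\bbb$ (this lemma) and $\bbb=\www$ (Theorem \ref{thm-1}) with matching norm equivalences.
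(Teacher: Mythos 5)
Your proposal is correct and takes essentially the same route as the paper: the $\bbb\subset\aaa$ direction via the ball-averaged H\"older inequality, and the $\aaa\subset\bbb$ direction via Lemma \ref{lem-2} combined with Remark \ref{rem-1}(ii)--(iv) to control $[\cm(\varphi^q)]^{1/q}$ in $\vlp$. You merely make explicit two points the paper leaves tacit (that Remark \ref{rem-1}(iv) furnishes $\varphi\in L^q(\cx)$ so that Lemma \ref{lem-2} applies, and that $p(\cdot)/q$ stays in $C^{\log}(\cx)$ with lower bound above $1$), which is a useful clarification but not a different argument.
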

\begin{proof}
Let $f\in \aaa $. Then for any given $\varepsilon >0$, there exist
a constant $q\in[1,p^-)$ and a non-negative function $\varphi\in \vlp$ satisfying
\eqref{eq-2.5x} and
$$\|f\|_{\vlp}+\|\varphi\|_{\vlp}<\|f\|_{\aaa}+\varepsilon.$$
By Remark \ref{rem-1} (iv) and Lemma \ref{lem-2}, we deduce that, for all balls $B\subset \cx$
and a.e. $x\in B$,
$$\lf|f(x)-\frac{1}{\mu(B)}\int_Bf(z)\,d\mu(z)\r|\leq Cr(B)^s\cm(\varphi^q)(x)^{1/q},$$
which, together with Remark \ref{rem-1} (ii) and (iii), implies that $f\in \bbb$ and
\begin{align*}
\|f\|_\bbb &\lesssim \|f\|_\vlp+\|[\cm(\varphi^q)]^{1/q}\|_\vlp \\
 &\lesssim \|f\|_\vlp+\|\varphi\|_\vlp\\
 & \lesssim\|f\|_\aaa +\varepsilon.
\end{align*}
Letting $\varepsilon\to 0$, we conclude that
$\|f\|_\bbb\lesssim\|f\|_\aaa.$

Conversely, let $f\in \bbb$. Then for any $\varepsilon>0$, there exists a function
$h\in \vlp$ satisfying \eqref{eq-2.0x} and
\begin{equation}
\|f\|_\vlp+\|h\|_\vlp<\|f\|_\bbb+\varepsilon.
\label{eq-6}
\end{equation}
By \eqref{eq-2.0x} and the H\"older inequality, we deduce that for any ball
$B\subset \cx$ and a.e. $x\in B$,
\begin{align*}
\frac{1}{\mu(B)}\int_B\lf|f(x)-\frac{1}{\mu(B)}\int_Bf(y)\,d\mu(y)\r|\,d\mu(x)
&\leq r(B)^s\frac{1}{\mu(B)}\int_B h(x)\,d\mu(x)\\
&\leq r(B)^s\lf\{\frac{1}{\mu(B)}\int_Bh(x)^q\,d\mu(x)\r\}^{1/q},
\end{align*}
where the $q\in[1,p^-)$. Thus, together with \eqref{eq-6}, we have $f\in \aaa$ and
$$
\|f\|_\aaa\leq\|f\|_\vlp+\|h\|_\vlp
<\|f\|_\bbb+\varepsilon.
$$
Letting $\varepsilon\to 0$, we conclude that $\|f\|_\aaa\leq\|f\|_\bbb.$
This finishes the proof of Lemma \ref{lem-3}.
\end{proof}

\begin{proof}[Proof of Theorem \ref{thm-2}]
The conclusion of Theorem \ref{thm-2} is just
a consequence of Theorem \ref{thm-1} and Lemma \ref{lem-3}.
Indeed, we easily find that
$$\|f\|_\vlp+\inf_{\varphi}\|\varphi\|_\vlp\sim\|f\|_{\aaa}\sim
\|f\|_{\bbb}\sim\|f\|_{\vlp}+\inf_{h}\|h\|_{\vlp}\sim\|f\|_{\www},$$
where the infimums are taken over all
non-negative function $\varphi\in\vlp$ satisfying \eqref{eq-2.5x}.
The proof of Theorem \ref{thm-2} is completed.
\end{proof}

Next we introduce some variants of the sharp maximal function in \eqref{eq-7} as follows.
\begin{equation*}
 \widetilde f_{u}^s(x):=\sup_{t\in(0,\fz)}t^{-s}\inf_{B\in\cc}
 \lf\{\frac{1}{\mu(B(x,t))}\int_{B(x,t)}|f(y)-B|^u\,d\mu(y)\r\}^{1/u}
\end{equation*}
and
\begin{equation*}
\overline f_{u}^s(x):=\sup_{t\in(0,\fz)}t^{-s}
\lf\{\frac{1}{\mu(B(x,t))}\int_{B(x,t)}|f(y)-f(x)|^u\,d\mu(y)\r\}^{1/u},
\end{equation*}
where the function $f$ and the constants $s,u$ are as in \eqref{eq-7}.

\begin{rem}\label{rem-3}
For $s\in \rr, u \in [1,\infty)$ and $x \in \cx$. Let $f$ be a locally
integrable function, then
\begin{equation*}
\widetilde f_{u}^s(x)\leq f_{u}(x)\leq2\widetilde f_{u}^s(x)
\end{equation*}
and
\begin{equation*}
\widetilde f_{u}^s(x)\leq \overline f_{u}^s(x)\leq f_{u}^s(x)+2\cm (f)(x).
\end{equation*}
For the proof, we refer to \cite{Ya03}.
\end{rem}

Theorem \ref{thm-3} and Remark \ref{rem-3} yield the following results.
\begin{cor}\label{co-1}
Let $\cx$ be a space of homogeneous type, $p(\cdot)\in C^{\log}(\cx)$ with
$p^-\in(1,\fz)$, $u\in[1,p^-)$ and $s\in(0,\fz).$ Then the following
statements are equivalent:
\begin{enumerate}
\item[{\rm(i)}] $f\in\www$;

\item[{\rm(ii)}] $f\in\vlp$ and $f_{u}^s\in\vlp$;

\item[{\rm(iii)}] $f\in\vlp$ and
$\widetilde f_{u}^s\in\vlp$;

\item[{\rm(iv)}] $f\in\vlp$  and
$\overline f_{u}^s\in\vlp$.
\end{enumerate}
Moreover, it holds true
\begin{align*}
\|f\|_\www\thicksim &\|f\|_\vlp+\|f_{u}^s\|_\vlp\\
\thicksim&\|f\|_\vlp+\|\widetilde f_{u}^s\|_\vlp\\
\thicksim&\|f\|_\vlp+\|\overline f_{u}^s\|_\vlp,
\end{align*}
where the implicit constants are independent of $f$.
\end{cor}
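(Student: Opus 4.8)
The plan is to derive Corollary \ref{co-1} directly from Theorem \ref{thm-3} and the pointwise comparisons recorded in Remark \ref{rem-3}, invoking only two elementary properties of the Luxemburg norm: its monotonicity, namely $0\le g_1\le g_2$ $\mu$-a.e. implies $\|g_1\|_\vlp\le\|g_2\|_\vlp$ (immediate from the definition), and the triangle inequality $\|g_1+g_2\|_\vlp\le\|g_1\|_\vlp+\|g_2\|_\vlp$, which holds since $\vlp$ is a Banach space by Remark \ref{rem-1} (i). Throughout, $f\in\vlp$ is a standing hypothesis shared by (i)--(iv), so only the integrability of the various maximal functions needs to be compared.

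The equivalence (i)$\Leftrightarrow$(ii), together with the first displayed norm equivalence $\|f\|_\www\sim\|f\|_\vlp+\|f_u^s\|_\vlp$, is precisely Theorem \ref{thm-3}; nothing further is needed there. For (ii)$\Leftrightarrow$(iii), the first chain in Remark \ref{rem-3} gives $\widetilde f_u^s\le f_u^s\le 2\widetilde f_u^s$ pointwise, whence $\widetilde f_u^s\in\vlp$ if and only if $f_u^s\in\vlp$, and by monotonicity $\|\widetilde f_u^s\|_\vlp\le\|f_u^s\|_\vlp\le 2\|\widetilde f_u^s\|_\vlp$; adding $\|f\|_\vlp$ throughout yields $\|f\|_\vlp+\|f_u^s\|_\vlp\sim\|f\|_\vlp+\|\widetilde f_u^s\|_\vlp$.

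For (ii)$\Leftrightarrow$(iv) I would use the second chain in Remark \ref{rem-3}, namely $\widetilde f_u^s\le\overline f_u^s\le f_u^s+2\cm(f)$. If $f_u^s\in\vlp$, then by the triangle inequality and Remark \ref{rem-1} (iii) (boundedness of $\cm$ on $\vlp$, available because $p^->1$),
$$\|\overline f_u^s\|_\vlp\le\|f_u^s\|_\vlp+2\|\cm(f)\|_\vlp\lesssim\|f\|_\vlp+\|f_u^s\|_\vlp,$$
so (ii)$\Rightarrow$(iv). Conversely, combining $f_u^s\le 2\widetilde f_u^s\le 2\overline f_u^s$ with monotonicity gives $\|f_u^s\|_\vlp\le 2\|\overline f_u^s\|_\vlp$, so (iv)$\Rightarrow$(ii); the two estimates together give $\|f\|_\vlp+\|f_u^s\|_\vlp\sim\|f\|_\vlp+\|\overline f_u^s\|_\vlp$. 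Chaining these norm equivalences with Theorem \ref{thm-3} produces the asserted three-fold equivalence of norms and completes the argument.

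The only point that is not purely formal is the estimate $\|\overline f_u^s\|_\vlp\lesssim\|f\|_\vlp+\|f_u^s\|_\vlp$: it genuinely relies on $p^->1$, since it is exactly the boundedness of the Hardy--Littlewood maximal operator on $\vlp$ (Remark \ref{rem-1} (iii)) that absorbs the term $2\cm(f)$ occurring in Remark \ref{rem-3}, and it is this inequality that lets one pass from (iv) back to (ii). Everything else reduces to monotonicity and the triangle inequality for the Luxemburg norm, so I expect no serious obstacle here.
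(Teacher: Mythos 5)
Your proposal is correct and takes essentially the same route the paper intends: the paper simply states that Corollary~\ref{co-1} follows from Theorem~\ref{thm-3} and Remark~\ref{rem-3}, and your argument spells out exactly that deduction, correctly identifying that the only non-formal ingredient is absorbing the $2\cm(f)$ term via the boundedness of $\cm$ on $\vlp$ from Remark~\ref{rem-1}~(iii).
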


We end this section by the following conclusion, which has its own interest.
\begin{prop}\label{prop-1}
The space $\www$  is a Banach space.
\end{prop}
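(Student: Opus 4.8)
The plan is to split the statement into two parts: first, that $\www$ is a normed vector space; second, that it is complete. The first part is routine and I would dispatch it quickly. Since $\|f\|_\www\ge\|f\|_\vlp$, vanishing of $\|f\|_\www$ forces $f=0$ in $\vlp$. Homogeneity follows from the identity $\cd_s(\lz f)=\{|\lz|g:\ g\in\cd_s(f)\}$ for $\lz\ne0$ (together with $0\in\cd_s(0)$), which gives $\inf_{g\in\cd_s(\lz f)}\|g\|_\vlp=|\lz|\inf_{g\in\cd_s(f)}\|g\|_\vlp$, combined with the homogeneity of $\|\cdot\|_\vlp$. The triangle inequality follows from the inclusion $\cd_s(f)+\cd_s(h)\st\cd_s(f+h)$ — immediate from $|(f+h)(x)-(f+h)(y)|\le|f(x)-f(y)|+|h(x)-h(y)|$ — together with the triangle inequality in $\vlp$. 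So the real content is completeness, and here I would use that $\vlp$ is a Banach space (Remark \ref{rem-1}(i)).

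For completeness, let $\{f_k\}_k$ be Cauchy in $\www$. Since $\|\cdot\|_\vlp\le\|\cdot\|_\www$ it is Cauchy in $\vlp$, hence converges in $\vlp$ to some $f$. By the standard criterion it suffices to show that a subsequence converges to $f$ in $\www$, so after passing to a subsequence (still written $\{f_k\}_k$) I may assume $\|f_{k+1}-f_k\|_\www<2^{-k}$ for all $k$. For each $k$, the definition of $\|\cdot\|_\www$ lets me pick a non-negative $g_k\in\cd_s(f_{k+1}-f_k)$ with $\|g_k\|_\vlp<2^{-k}$. Because $\vlp$ is complete, for each $j$ the tail $G^{(j)}:=\sum_{k\ge j}g_k$ converges in $\vlp$, with $G^{(j)}\ge0$ and $\|G^{(j)}\|_\vlp\le\sum_{k\ge j}2^{-k}=2^{1-j}$.

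The heart of the argument is a passage to the limit in the Haj{\l}asz inequality. For $j\le m$, summing the defining inequalities for $g_j,\dots,g_m$ and telescoping shows $\sum_{k=j}^m g_k\in\cd_s(f_{m+1}-f_j)$, i.e. for a.e. $x,y\in\cx$,
$$\lf|(f_{m+1}-f_j)(x)-(f_{m+1}-f_j)(y)\r|\le\rho(x,y)^s\lf[\,\sum_{k=j}^m g_k(x)+\sum_{k=j}^m g_k(y)\r].$$
Letting $m\to\fz$ for fixed $j$: since $f_{m+1}\to f$ and $\sum_{k=j}^m g_k\to G^{(j)}$ in $\vlp$, I would extract a single subsequence in $m$ along which both converge $\mu$-a.e., and add the resulting null set to the countably many null sets where the displayed inequalities fail; on the complement one passes to the limit to obtain
$$\lf|(f-f_j)(x)-(f-f_j)(y)\r|\le\rho(x,y)^s\lf[G^{(j)}(x)+G^{(j)}(y)\r],$$
that is, $G^{(j)}\in\cd_s(f-f_j)$. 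Taking $j=1$ and adding a gradient $g_0\in\cd_s(f_1)$ (which exists since $f_1\in\www$) gives, via $\cd_s(f_1)+\cd_s(f-f_1)\st\cd_s(f)$, that $\cd_s(f)\ne\emptyset$; hence $f\in\www$. Finally $\|f_j-f\|_\www\le\|f_j-f\|_\vlp+\|G^{(j)}\|_\vlp\le\|f_j-f\|_\vlp+2^{1-j}\to0$, so the subsequence converges to $f$ in $\www$, and since the full sequence is Cauchy it too converges to $f$.

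The step I expect to be the main obstacle is exactly this $m\to\fz$ passage: the membership $g\in\cd_s(u)$ is a ``for a.e. $(x,y)$'' condition, so to take the limit I must arrange simultaneous pointwise convergence of $f_{m+1}$ and of the partial sums $\sum_{k=j}^m g_k$ off a single common null set, which costs one extra subsequence extraction and some bookkeeping with null sets (carried out separately for each fixed $j$). Everything else — the normed-space axioms, the $\vlp$-summability of $\sum_k g_k$, and the Cauchy-to-convergent reduction at the end — is standard.
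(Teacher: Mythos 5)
Your proof is correct and follows essentially the same route as the paper's: pass to a subsequence with $\|u_{n+1}-u_n\|_{\cw_{p(\cdot)}^s(\cx)}<2^{-n}$, pick gradients $g_n$ with $\|g_n\|_{L^{p(\cdot)}(\cx)}<2^{-n}$, telescope, pass to the limit pointwise, and control $\sum_{i\ge n}g_i$ in $L^{p(\cdot)}(\cx)$ by Fatou. The only stylistic difference is that you extract an extra subsequence (for each $j$) to handle a.e.\ convergence of the partial sums; this is unnecessary since the $g_k$ are non-negative, so the partial sums increase monotonically and converge everywhere to $\sum_{k\ge j}g_k(x)\in[0,\infty]$ --- the paper uses this directly and only needs the one initial subsequence giving $u_n\to u$ a.e.
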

\begin{proof}
If $(u_n)_{n=1}^\infty$ is a Cauchy sequence in $\www$,
then from Remark \ref{rem-1} (i), we obtain $(u_n)_{n=1}^\infty$
converges to a function $u$ in $\vlp$. It remains to prove that $(u_n)_{n=1}^\infty$ converges
to $u$ also in $\www$. Passing to a subsequence if necessary we assume that
$$\lf\|u_{n+1}-u_{n}\r\|_{\www}<2^{-n},$$
and that $u_n\to u$ almost everywhere (see \cite[Proposition 2.67]{cruz03}). Moreover, by the definition \ref{de-4}, we know there exists
$g_n\in \vlp$ such that, for a.e. $x,y\in\cx$,
$$\lf|\lf(u_{n+1}-u_n\r)(x)-\lf(u_{n+1}-u_n\r)(y)\r|\leq\ \rho(x,y)^s\lf(g_n(x)+g_n(y)\r)$$
and  $$\lf\|g_n\r\|_{\vlp}<2^{-n}.$$
Moreover, we find that, for all $k\geq1$,
$$\lf|(u_{n+k}-u_n)(x)-(u_{n+k}-u_n)(y)\r|\leq\
\rho(x,y)^s\lf(\sum_{i=n}^{n+k-1}g_i(x)+\sum_{i=n}^{n+k-1}g_i(y)\r).$$
By taking $k\to \infty$, we obtain the inequality$$ |(u-u_n)(x)-(u-u_n)(y)|\leq\
\rho(x,y)^s\lf(\sum_{i=n}^{\infty}g_i(x)+\sum_{i=n}^{\infty}g_i(y)\r).$$
On the other hand, from the Fatou lemma (see \cite[Theorem 2.61]{cruz03}), we have
$$\lf\|\sum_{i=n}^{\infty}g_i\r\|_{\vlp}\leq
\sum_{i=n}^{\infty}\lf\|g_i\r\|_{\vlp}<\sum_{i=n}^{\infty}2^{-i}<2^{-n+1}.$$
Thus, we obtain $u\in \cw_{p(\cdot)}^s$ and  $u_n \to u$ in $\cw_{p(\cdot)}^s$. This
finishes the proof of Proposition \ref{prop-1}.
\end{proof}

\noindent\textbf{Acknowledgement}\quad
The authors would like to express their deep thanks to Professor Ciqiang Zhuo
for suggesting this interesting topic
and many useful discussion on the manuscript.

\section*{Statements and Declarations}
\noindent Funding: Not applicable.

\smallskip

\noindent Informed Consent Statement: Not applicable.

\smallskip

\noindent Data Availability Statement:  Not applicable.

\smallskip

\noindent Conflicts of Interest: The authors declares no conflict of interest.

\section*{Authors information}
\noindent Xiaosi Zhang

\smallskip

\noindent Key Laboratory of Computing and Stochastic Mathematics
(Ministry of Education), School of Mathematics and Statistics,
Hunan Normal University,
Changsha, Hunan 410081, People's Republic of China

\smallskip

\noindent {\it E-mail}: \texttt{xs102019@hunnu.edu.cn}

\medskip

\medskip

\noindent Qi Sun

\smallskip

\noindent Key Laboratory of Computing and Stochastic Mathematics
(Ministry of Education), School of Mathematics and Statistics,
Hunan Normal University,
Changsha, Hunan 410081, People's Republic of China

\smallskip

\noindent {\it E-mail}: \texttt{qsun7798@163.com; qsun7798@hunnu.edu.cn}


\begin{thebibliography}{100}

\vspace{-0.3cm}

\bibitem{am02} E. Acerbi and G. Mingione, Regularity results for stationary
electro-rheological fluids, Arch. Ration. Mech. Anal. 164 (2002), 213-259.

\vspace{-0.3cm}

\bibitem{am05} E. Acerbi and G. Mingione,
Gradient estimates for the $p(x)$-Laplacean system,
J. Reine Angew. Math. 584 (2005), 117-148.

\vspace{-0.3cm}

\bibitem{ah15} T. Adamowicz, P. Harjulehto and P. H\"{a}st\"{o}, Maximal operator in variable
    exponent Lebesgue spaces on unbounded quasimetric measure spaces, Math. Scand. 116 (2015),
    5-22.

\vspace{-0.3cm}

\bibitem{ah10} A. Almeida and P. H\"ast\"o,
Besov spaces with variable smoothness and integrability,
J. Funct. Anal. 258 (2010), 1628-1655.

\vspace{-0.3cm}

\bibitem{as09} A. Almeida and S. Samko, Embeddings of variable Haj{\l}asz-Sobolev spaces into
    H\"{o}lder spaces of variable order, J. Math. Anal. Appl. 353 (2009), 489-496.

\vspace{-0.3cm}

%\bibitem{bo31} Z. Birnbaum and W. Orlicz, \"{U}ber die Verallgemeinerung des Begriffes der zueinander konjugierten Potenzen, Studia Math. 3 (1931), 1-67.
%
%
%\vspace{-0.3cm}

\bibitem{cw71} R. Coifman and G. Weiss, Analyse harmonique non-commutative sur certains
espaces homog\`{e}nes,
Lecture Notes in Mathematics, Springer-Verlag, Berlin, 1971.

\vspace{-0.3cm}

\bibitem{cruz03} D. Cruz-Uribe, The Hardy--Littlewood maximal
operator on variable-$L^p$
 spaces, in: Seminar of Mathematical Analysis (Malaga/Seville, 2002/2003), 147-156,
 Colecc. Abierta 64, Univ. Sevilla Secr. Publ. Seville, 2003.

\vspace{-0.3cm}

\bibitem{cf13} D. Cruz-Uribe and A. Fiorenza, Variable Lebesgue Spaces. Foundations and
Harmonic Analysis, Applied and Numerical Harmonic Analysis. Birkh\"{a}user/Springer,
Heidelberg, 2013.

\vspace{-0.3cm}

\bibitem{cw14} D. Cruz-Uribe and L.-A. D. Wang, Variable Hardy spaces,
Indiana Univ. Math. J. 63 (2014), 447-493.

\vspace{-0.3cm}

\bibitem{ds84} R.A. DeVore and R.C. Sharpley, Maximal functions measuring smoothness,
Mem. Amer. Math. Soc. 47 (1984), 1-115.

\vspace{-0.3cm}

\bibitem{din04} L. Diening, Maximal function on generalized Lebesgue
 spaces $\vlp$, Math. Inequal. Appl. 7 (2004), 245-253.

\vspace{-0.3cm}

\bibitem{dh11} L. Diening, P. Harjulehto, P. H\"ast\"o and M. R{$\mathring{\rm u}$}\v{z}i\v{c}ka,
Lebesgue and Sobolev Spaces with Variable Exponents, Lecture Notes in Math. 2017.
Springer, Heidelberg, 2011.

\vspace{-0.3cm}

\bibitem{dhr09} L. Diening, P. H\"ast\"o and S. Roudenko, Function spaces of
variable smoothness and integrability, J. Funct. Anal. 256 (2009), 1731-1768.

\vspace{-0.3cm}

\bibitem{fz01}
X. Fan and D. Zhao, On the spaces $L^{p(x)}(\Omega)$ and $W^{m,p(x)}(\Omega)$,
 J. Math. Anal. Appl. 263 (2001), 424-446.

\vspace{-0.3cm}

\bibitem{Ha96} P. Haj{\l}asz, Sobolev spaces on arbitrary metric spaces, Potential Anal.
5(1996), 403-415.

\vspace{-0.3cm}

\bibitem{hhl08}
P. Harjulehto, P. H\"{a}st\"{o} and V. Latvala,
Minimizers of the variable exponent, non-uniformly convex Dirichlet energy,
J. Math. Pures Appl. (9) 89 (2008), 174-197.

\vspace{-0.3cm}

\bibitem{hh04} P. Harjulehto, P. H\"{a}st\"{o} and M. Pere, Variable exponent Lebesgue spaces on
metric spaces: The Hardy-Littlewood maximal operator, Real Anal. Exchange, 30 (2004), 87-104.

\vspace{-0.3cm}

\bibitem{hh06} P. Harjulehto, P. H\"{a}st\"{o} and M. Pere, Variable exponent Sobolev spaces
on metric measure spaces, Funct. Approx. Comment. Math. 36 (2006), 79-94.

\vspace{-0.3cm}

\bibitem{ha18} M. C. Hassib and Y. Akdim, Weighted Variable Exponent Sobolev spaces on metric
    measure spaces, Moroccan Journal of Pure and Applied Analysis, 4 (2018), 62-76.

\vspace{-0.3cm}

\bibitem{hu03} J. Hu, A note on Haj{\l}asz-Sobolev spaces on fractals, J. Math. Anal.
Appl. 280 (2003), 91-101.

\vspace{-0.3cm}

\bibitem{kr91} O. Kov\'{a}cik and J. R\'{a}kosn\'{\i}k, On spaces $L^{p(x)}$ and $W^{1,P(x)}$,
Czechoslovak Math. J. 41 (1991), 592-618.

\vspace{-0.3cm}

\bibitem{ll02} Y. Liu, G. Lu and R. L. Wheeden, Some equivalent definitions of high order
Sobolev spaces on stratified groups and generalizations to metric spaces, Math. Ann. 323 (2002),
157-174.

\vspace{-0.3cm}

\bibitem{ns12} E. Nakai and Y. Sawano, Hardy spaces with variable exponents
and generalized Campanato spaces, J. Funct. Anal. 262 (2012), 3665-3748.

\vspace{-0.3cm}

\bibitem{S00} S. Samko, Denseness of $C_0^\fz(\rn)$ in the generalized Sobolev spaces
$W^{M,P(x)}(\rn)$, Direct and inverse problems of mathematical physics
(Newark, DE, 1997), 333-342, Int. Soc. Anal. Appl. Comput.,
5, Kluwer Acad. Publ., Dordrecht, 2000.

\vspace{-0.3cm}

\bibitem{su09}
M. Sanch\'{o}n and J. Urbano,
Entropy solutions for the $p(x)$-Laplace equation,
Trans. Amer. Math. Soc. 361 (2009), 6387-6405.


\vspace{-0.3cm}

\bibitem{Tr92} H. Triebel, Theory of Function Spaces II, Birkhauser Verlag, Basel, 1992.

\vspace{-0.3cm}

\bibitem{Ya03} D. Yang, New characterizations of Haj{\l}asz-Sobolev spaces on metri
    spaces, Sci.China Ser. A 46(2003), 675-689.

\vspace{-0.3cm}

\bibitem{zsy16} C. Zhuo, Y. Sawano and D. Yang,
Hardy spaces with variable exponents on RD-spaces and applications, Dissertationes Math.
520 (2016), 74 pp.

\vspace{-0.3cm}

\bibitem{yzy15}
D. Yang, C. Zhuo and W. Yuan, Triebel--Lizorkin type spaces with variable exponents,
Banach J. Math. Anal. 9 (2015), no. 4, 146-202.

\vspace{-0.3cm}

\bibitem{yzy151} D. Yang, C. Zhuo and W. Yuan, Besov-type spaces with variable
smoothness and integrability, J. Funct. Anal. 269 (2015), 1840-1898.
\end{thebibliography}
\end{document}